\documentclass{amsart}
\usepackage[margin=1in]{geometry}
\usepackage{amsmath}
\usepackage{amsfonts}
\usepackage{amssymb}
\usepackage{amsthm}

\newcommand{\CC}{\mathbb{C}}

\newcommand{\ZZ}{\mathbb{Z}}
\newcommand{\QQ}{\mathbb{Q}}

\theoremstyle{theorem}
\newtheorem{theorem}{Theorem}
\newtheorem{lemma}{Lemma}

\begin{document}
\title{A Corrigendum to Unreasonable Slightness}

\author{Arseniy Sheydvasser}

\maketitle

\begin{abstract}
We revisit Bogdan Nica's 2011 paper, \emph{The Unreasonable Slightness of $E_2$ over Imaginary Quadratic Rings}, and correct an inaccuracy in his proof.
\end{abstract}

\section{Introduction:}
%Changed SL2 to SLn
We review briefly the setup of Nica's paper \cite{nica11}. Let $A$ be a commutative ring, and $E_n(A)$ the group generated by elementary matrices, i.e. matrices in $SL_n(A)$ that have ones on the diagonal, and have one non-zero off-diagonal entry. It is a basic fact that $SL_n(\ZZ) = E_n(\ZZ)$, which prompts the question: what happens for other commutative rings $A$?

In the course of their resolution of the congruence subgroup problem, Bass, Milnor, and Serre \cite{BMS} settled the matter for algebraic number fields in higher dimensions:

	\begin{theorem}[Bass-Milnor-Serre]
	Let $A = \mathcal{O}_K$ be the ring of integers of any algebraic number field $K$. Then, if $n > 2$, $SL_n(A) = E_n(A)$.
	\end{theorem}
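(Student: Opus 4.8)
The plan is to combine Bass's stable-range machinery with the vanishing of $SK_1(\mathcal{O}_K)$. The geometric heart of the matter is the transitivity of the elementary group on unimodular rows, so I would first pin down the relevant stability input for $A = \mathcal{O}_K$. Since $\mathcal{O}_K$ is a Dedekind domain it is Noetherian of Krull dimension $1$, and by Bass's estimate $\mathrm{sr}(A) \leq \dim(A) + 1 = 2$. Concretely this asserts that any unimodular row $(a_1, a_2, a_3)$ can be shortened: there exist $t_1, t_2 \in A$ making $(a_1 + t_1 a_3,\, a_2 + t_2 a_3)$ unimodular. I would derive this from the Dedekind structure, in particular from the fact that $A/\mathfrak{a}$ is a finite, hence semilocal, ring for every nonzero ideal $\mathfrak{a}$; this is the one genuinely arithmetic fact used at this stage.

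Next I would convert the stable-range bound into transitivity. The standard consequence of $\mathrm{sr}(A) \leq 2$ is that for every $n \geq 3$ the group $E_n(A)$ acts transitively on the unimodular rows (equivalently, columns) of length $n$: every such vector can be carried to $(1, 0, \ldots, 0)$ by elementary operations. Granting this, I run the usual induction on $n$. Given $g \in SL_n(A)$ with $n \geq 3$, its first column is unimodular, so after multiplying $g$ on the left by a suitable element of $E_n(A)$ I may assume this column equals $(1, 0, \ldots, 0)^{\mathsf{T}}$; a further round of elementary operations clears the rest of the first row and puts $g$ into block form $\mathrm{diag}(1, g')$ with $g' \in SL_{n-1}(A)$. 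Iterating as long as the ambient size remains at least $3$ reduces $g$, modulo $E_n(A)$, to a matrix of the shape $\mathrm{diag}(1, \ldots, 1, M)$ with $M \in SL_2(A)$ in the bottom-right corner.

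The final and decisive step is to show that this residual block is already elementary, i.e. that $\mathrm{diag}(1, \ldots, 1, M) \in E_n(A)$ for every $M \in SL_2(A)$ once $n \geq 3$. This is exactly where the extra ambient coordinate matters: with three distinct indices available the Steinberg relation $[e_{ij}(s), e_{jk}(t)] = e_{ik}(st)$ becomes usable, $E_n(A)$ is normal in $SL_n(A)$, and by Bass–Vaserstein stability the map $SL_n(A)/E_n(A) \to SK_1(A)$ is an isomorphism for $n \geq \mathrm{sr}(A) + 1 = 3$. Hence the residual class is well-defined and $n$-independent, namely the image of $M$ in $SK_1(A)$, and the entire theorem collapses to the single assertion $SK_1(\mathcal{O}_K) = 0$. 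I expect this to be the main obstacle, and it is arithmetic rather than formal: one realizes $SK_1(A)$ through Mennicke symbols $\left[\begin{smallmatrix} a \\ b \end{smallmatrix}\right]$ attached to unimodular pairs, establishes their bimultiplicativity and their dependence only on $b \bmod a$, and then forces every symbol to be trivial by reducing the denominator to a prime via Dirichlet-type density of primes in arithmetic progressions in $\mathcal{O}_K$, together with control of the ideal class group. Everything preceding this is soft module theory; this last input is where the number theory, and the link to the congruence subgroup problem, is unavoidable.
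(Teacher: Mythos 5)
First, a point of comparison: the paper does not prove this theorem. It is quoted as background, with the proof residing entirely in the cited Bass--Milnor--Serre reference, so there is no ``paper's own proof'' to measure your argument against. On its own terms, your outline reproduces the standard architecture correctly in its soft parts: $\mathcal{O}_K$ is a Dedekind domain whose proper quotients are finite, hence semilocal, which gives stable range at most $2$; therefore $E_n(A)$ acts transitively on unimodular rows of length $n \geq 3$, every $g \in SL_n(A)$ reduces modulo $E_n(A)$ to $\mathrm{diag}(1,\dots,1,M)$ with $M \in SL_2(A)$, and by normality of $E_n$ and stability the obstruction to $g \in E_n(A)$ is exactly the class of $M$ in $SK_1(A)$. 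All of that is right, and it is how one organizes the proof today.

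The gap is that the ``single assertion $SK_1(\mathcal{O}_K)=0$'' to which you reduce everything is not an input you can import from elsewhere: it \emph{is} the Bass--Milnor--Serre theorem. Essentially all of the content of their paper, in the absolute case, is concentrated in proving that the universal Mennicke symbol on $\mathcal{O}_K$ is trivial, and your sketch of that step is too thin to count as a proof. The bimultiplicativity of the Mennicke symbol and its invariance under the moves $b \mapsto b + ta$ and $a \mapsto a + tb$ are themselves nontrivial lemmas, and the triviality argument is not merely ``Dirichlet density plus control of the class group.'' After using Dirichlet's theorem to replace the denominator by a prime $\mathfrak{p}$, one must still show that the induced symbol on $(\mathcal{O}_K/\mathfrak{p})^\ast$ vanishes, and this is precisely the delicate point: for totally imaginary $K$ the analogous \emph{relative} Mennicke symbols are genuinely nontrivial --- they are power residue symbols, which is why the congruence subgroup property fails for $SL_2$ and why the metaplectic kernel appears in the same paper --- so any correct argument must explain why the absolute symbol, unlike the relative one, dies. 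Until that step is actually carried out, what you have is a correct reduction of the theorem to its hardest part rather than a proof of it.
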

	
Similarly, in dimension 2 and for all algebraic number fields other than the imaginary quadratic ones, Vaserstein \cite{vaserstein} showed that elementary matrices are enough:

	\begin{theorem}[Vaserstein]
	Let $A = \mathcal{O}_K$ be the ring of integers of an algebraic number field $K$. Then, if $K$ is not imaginary quadratic, $SL_2(A) = E_2(A)$.
	\end{theorem}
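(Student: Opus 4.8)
The plan is to reduce the identity $SL_2(A)=E_2(A)$ to a problem about reducing unimodular vectors, and then to feed in the arithmetic of $K$ through Dirichlet's unit theorem. As preliminaries I would record two standard identities: for every unit $u\in A^\times$ the diagonal matrix $\mathrm{diag}(u,u^{-1})$ and the Weyl element $\begin{pmatrix} 0 & -1 \\ 1 & 0 \end{pmatrix}$ both lie in $E_2(A)$, so that unit scalings and the coordinate swap are available as elementary operations. Now given $M\in SL_2(A)$ with first column $(a,c)$, the relation $\det M=1$ forces $(a,c)$ to be unimodular, i.e.\ $xa+yc=1$ for some $x,y\in A$. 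Left multiplication by $E_2(A)$ acts on $M$ by elementary row operations, namely $(a,c)\mapsto(a+tc,c)$ and $(a,c)\mapsto(a,c+ta)$ together with the two operations above. I claim $SL_2(A)=E_2(A)$ holds if and only if every unimodular pair can be carried to $(1,0)$ by such operations: if $g\in E_2(A)$ sends the first column of $M$ to $(1,0)$, then $gM=\begin{pmatrix} 1 & b \\ 0 & d\end{pmatrix}$, whence $d=1$ by the determinant and one further elementary operation clears $b$, giving $M\in E_2(A)$.

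It therefore remains to reduce an arbitrary unimodular pair $(a,c)$ to $(1,0)$. If $c=0$ then $a\in A^\times$ and the diagonal identity finishes the job, so assume $c\ne 0$. Over $\ZZ$ the two translation moves are just the Euclidean algorithm and terminate by descent on $|c|$; this settles the case $K=\QQ$ classically and is the reason $A=\ZZ$ needs no units. For a general $\mathcal{O}_K$ there is no norm-decreasing division, and I would instead exploit the Dedekind structure of $A$: using the Chinese Remainder Theorem together with the existence of primes in prescribed residue classes modulo an ideal (Dirichlet/Chebotarev), I would replace $a$ by $a+tc$ so as to simplify the factorization of the ideal $(c)$, setting up an induction on the number of its prime factors.

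The crux — and the one place where the hypothesis on $K$ is genuinely used — is the termination of this reduction. I would package the obstruction to reaching $(1,0)$ as a Mennicke-type symbol attached to $(a,c)$, so that reducibility becomes equivalent to triviality of the symbol; for $n\ge 3$ the larger elementary group supplies enough relations to kill every such symbol, which is exactly the content behind Theorem 1. In dimension $2$ those relations are missing and must be manufactured from units: Dirichlet's theorem gives unit rank $r_1+r_2-1$, which is at least $1$ precisely when $K$ is neither $\QQ$ nor imaginary quadratic, so $A^\times$ is infinite and one has an infinite supply of diagonal elementaries $\mathrm{diag}(u,u^{-1})$ and their conjugates. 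Combining these with the approximation step should give enough maneuvering room to trivialize the symbol, equivalently to force the algorithm to terminate. I expect this trivialization to be the main obstacle, and it is the heart of Vaserstein's argument; its failure for the finite-unit imaginary quadratic rings is precisely what makes $E_2$ a proper — indeed, as Nica shows, a very thin — subgroup there.
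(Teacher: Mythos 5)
This theorem is quoted in the paper as background from Vaserstein's 1972 article; the paper contains no proof of it, so there is nothing internal to compare your proposal against. Judged on its own, the proposal has a genuine gap, and the gap sits exactly where the theorem lives. Every step you actually carry out --- that $\mathrm{diag}(u,u^{-1})$ and the Weyl element lie in $E_2(A)$, that $SL_2(A)=E_2(A)$ reduces to carrying every unimodular pair to $(1,0)$ by elementary moves, the case $c=0$, the Euclidean descent over $\ZZ$, and the CRT/Dirichlet step that simplifies the factorization of the ideal $(c)$ --- is valid for the ring of integers of \emph{any} number field, imaginary quadratic ones included. In particular everything you justify holds verbatim for $\ZZ[\sqrt{-5}]$, where the conclusion is false by Cohn's theorem (Theorem 3 of this paper). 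A proof that never uses a hypothesis cannot establish a statement that fails without it, so the argument must be incomplete exactly at the point where you write that the units ``should give enough maneuvering room to trivialize the symbol.''

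That deferred step is not a technicality; it is the entire content of Vaserstein's theorem. One must convert the positivity of the unit rank $r_1+r_2-1$ into the triviality of the Mennicke symbol on unimodular pairs, and this requires genuine arithmetic input: a statement about how the image of $A^\times$ sits inside $(A/\mathfrak{q})^\times$ for ideals $\mathfrak{q}$, Dirichlet's theorem on primes in arithmetic progressions in $K$ to choose good representatives, and a careful manipulation of the Mennicke relations (MS1, MS2) to force the symbol to vanish --- none of which is sketched here, and none of which is routine. You have correctly \emph{located} where the hypothesis on $K$ must enter, which is worth something, but locating it is not using it: as written, the proposal does not distinguish $\QQ(\sqrt{2})$ from $\QQ(\sqrt{-5})$, and so cannot be a proof.
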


In contrast, imaginary quadratic fields in dimension 2 behave very differently, as was first shown by Cohn \cite{cohn}:

	\begin{theorem}[Cohn]
	Let $K$ be an imaginary quadratic field, and let $A = \mathcal{O}_K$ be its ring of integers. If $K = \QQ(i),\QQ(\sqrt{-2}), \QQ(\sqrt{-3}), \QQ(\sqrt{-7}), \QQ(\sqrt{-11})$, then $SL_2(A) = E_2(A)$. Otherwise, $SL_2(A) \neq E_2(A)$.
	\end{theorem}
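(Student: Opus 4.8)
The plan is to prove both directions at once by matching the five exceptional fields with the norm-Euclidean property: $\mathcal{O}_K$ is Euclidean with respect to the field norm precisely when $d \in \{1,2,3,7,11\}$, so it suffices to show that $SL_2(A) = E_2(A)$ holds exactly for norm-Euclidean $A$. Throughout I would use the reformulation that $SL_2(A) = E_2(A)$ if and only if $E_2(A)$ acts transitively on unimodular columns $\binom{a}{c}$ (those with $aA + cA = A$). One implication is immediate, since every unimodular column extends to a matrix in $SL_2(A)$; for the converse, if $e\binom{a}{c} = \binom{1}{0}$ for some $e \in E_2(A)$, then for any $M \in SL_2(A)$ with first column $\binom{a}{c}$ the product $eM$ is upper unitriangular, hence elementary, so $M \in E_2(A)$. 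Under this reformulation the elementary row operations act on the ratio $\xi = a/c \in \PP^1(K)$ by $\xi \mapsto \xi + t$ and by inversion, so reducing a column to $\binom{1}{0}$ is exactly running a subtractive (continued-fraction) division chain on $\xi$ and asking whether it terminates at $\infty$.

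For the Euclidean direction I would argue directly: given $M \in SL_2(A)$ with first column $\binom{a}{c}$, the division algorithm lets me replace the larger-norm entry by a remainder of strictly smaller norm, alternating the two row operations until the lower entry becomes $0$, at which point the top entry is a unit $u$. A short computation shows that $w = \begin{pmatrix} 0 & 1 \\ -1 & 0 \end{pmatrix}$ lies in $E_2(A)$ and that $\operatorname{diag}(u, u^{-1}) \in E_2(A)$ for every $u \in A^\times$, which lets me clear the remaining diagonal unit and conclude. This step is routine; the only external input is that norm-Euclideanness of $\mathcal{O}_K$ is equivalent to $d \in \{1,2,3,7,11\}$, the classical computation of the covering radius of the lattice $\mathcal{O}_K \subset \CC$.

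The hard direction is the non-Euclidean case, and it is where care is needed. Non-Euclideanness says the lattice $\mathcal{O}_K$ has a deep hole: a point $\xi_0 \in \CC$ with $\operatorname{dist}(\xi_0, \mathcal{O}_K) \ge 1$, and by density I can take a nearby $\xi = a/c \in K$ with $\binom{a}{c}$ unimodular and still $\operatorname{dist}(\xi, \mathcal{O}_K) > 1$. It is easy to see that no single division step reduces such a column, since $\xi \notin \mathcal{O}_K$; the genuine difficulty---and the point at which a one-step argument is insufficient---is that the inversion move can carry a deep hole to a shallow point, so there is no naive monotone height that forbids the chain from eventually terminating. The main obstacle is therefore to produce a genuinely global invariant of the $E_2(A)$-orbit. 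The clean tool is Cohn's criterion of universal side divisors: a nonzero nonunit $u$ such that $A^\times \cup \{0\}$ surjects onto $A/uA$. I would first verify, using the norm bound $N(u) \le |A^\times| + 1$ together with the absence of elements of norm $2$ or $3$, that $\mathcal{O}_K$ has no universal side divisor exactly when $d \notin \{1,2,3,7,11\}$, and then show that transitivity of $E_2(A)$ on unimodular columns cannot coexist with the absence of such a divisor. Establishing this incompatibility rigorously---rather than inferring it from the non-reducibility of a single deep-hole column---is the crux of the argument and, I expect, the locus of the inaccuracy this corrigendum addresses.
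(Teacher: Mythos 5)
You are proposing a proof of a statement the paper never actually proves: Cohn's theorem is quoted as background, with the proof residing in \cite{cohn}, and the corrigendum's correction concerns Nica's theorem for the orders $\ZZ[di]$, where the Pell equation $X^2 - DY^2 = 1$ has no nontrivial solutions. So your closing speculation that the ``locus of the inaccuracy'' lies in the hard direction of Cohn's theorem misreads what is being corrected. That said, your reduction of $SL_2(A) = E_2(A)$ to transitivity of $E_2(A)$ on unimodular columns is correct, and the Euclidean half of the argument (division chains, $\left(\begin{smallmatrix} 0 & 1 \\ -1 & 0\end{smallmatrix}\right) \in E_2(A)$, and $\operatorname{diag}(u, u^{-1}) \in E_2(A)$) is routine and sound once one grants the classical determination of the norm-Euclidean imaginary quadratic fields.

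The non-Euclidean direction, however, contains a genuine gap that you have flagged but not closed. The universal side divisor criterion obstructs the existence of a Euclidean algorithm with respect to \emph{any} function, but ``Euclidean'' is strictly stronger than the property you must refute, namely that every unimodular column admits a terminating division chain. The implication ``no universal side divisor $\Rightarrow$ $E_2(A)$ is not transitive on unimodular columns'' is exactly the content of the hard direction; you offer no argument for it, and it is not a citable standard fact: nothing in the definition of a universal side divisor prevents a long division chain, whose intermediate inversions pass through shallow points, from eventually terminating. This is the same trap as the one-step deep-hole argument, merely pushed back one level, so your proposal defers the entire difficulty to an unproved claim. Cohn's actual proof takes a different route: he shows that for the non-exceptional $K$ the ring $\mathcal{O}_K$ is \emph{discretely normed} (no elements of absolute value strictly between $1$ and $2$), deduces from this that every element of $E_2(A)$ admits an essentially unique standard form as a product of a diagonal matrix with terms $E(a) = \left(\begin{smallmatrix} a & 1 \\ -1 & 0 \end{smallmatrix}\right)$, none of the interior parameters being $0$ or a unit, and then exhibits explicit determinant-one matrices admitting no such form. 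To complete your proof you would need to reconstruct that uniqueness-of-standard-form argument (or some other global invariant of the $E_2(A)$-orbit, such as Nica's special pairs from Lemma~\ref{Nica's lemma}); the universal side divisor, as it stands, only re-proves non-Euclideanness, which you already assumed.
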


We examine more closely the case where $A$ is an imaginary quadratic ring: that is, a subring of $\CC$ of the form $\ZZ[\omega]$, where $\omega$ is a non-real algebraic integer of degree 2. $A$ is necessarily of the form $\ZZ[\sqrt{-D}]$ or $\ZZ[\frac{1}{2}(1 + \sqrt{1 - 4D})]$ for some integer $D \geq 1$. Nica then gives an elementary proof of the following fact:

	\begin{theorem}[Nica]
	Let $A = \ZZ[\sqrt{-D}]$ or $A = \ZZ[\frac{1}{2}(1 + \sqrt{1 - 4D})]$ with $D \geq 4$. Then $E_2(A)$ is an infinite index, non-normal subgroup of $SL_2(A)$.
	\end{theorem}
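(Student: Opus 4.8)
The plan is to convert membership in $E_2(A)$ into the combinatorics of \emph{division chains} and then exploit an arithmetic gap special to $D \geq 4$. First I would record the dictionary: left multiplication by the two elementary generators performs the moves $(a,c) \mapsto (a + xc, c)$ and $(a,c)\mapsto (a, c + ya)$ on the first column, so $M \in E_2(A)$ exactly when its first column can be carried to $(u,0)$, $u \in A^{\times}$, by such moves. Projectively on $\PP^1(K)$ with $K = \mathrm{Frac}(A)$, this reads: $M \in E_2(A)$ iff $M \cdot \infty$ lies in the $E_2(A)$-orbit of $\infty$, where I use that the stabilizer of $\infty$ in $SL_2(A)$ consists of the matrices $\pm\left(\begin{smallmatrix}1 & \ast \\ 0 & 1\end{smallmatrix}\right)$, all of which are elementary (note $-I = E(1)^3$). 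Since this stabilizer sits inside $E_2(A)$, the $E_2(A)$-orbits on $SL_2(A)\cdot\infty$ biject with the cosets, giving
\begin{equation*}
[SL_2(A) : E_2(A)] = \#\bigl( E_2(A) \backslash (SL_2(A)\cdot \infty) \bigr).
\end{equation*}
Thus infinite index becomes the problem of producing infinitely many pairwise $E_2(A)$-inequivalent coprime pairs.

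The engine is a gap peculiar to $D \geq 4$: writing a nonzero element as $m+n\omega$ and bounding below by its imaginary part, one finds $|m+n\omega| \geq \sqrt{D} \geq 2$ whenever $n \neq 0$, while $n = 0$ forces an ordinary integer. Hence every nonzero $a \in A$ satisfies either $|a| = 1$ (so $a = \pm 1$, which incidentally pins down $A^{\times} = \{\pm 1\}$) or $|a| \geq \sqrt{D}$; in particular no element has absolute value in the open interval $(1,2)$. I would feed this into the normal form with $E(a) = \left(\begin{smallmatrix} a & 1 \\ -1 & 0\end{smallmatrix}\right)$: every element of $E_2(A)$ is $\pm E(a_1)\cdots E(a_n)$, whose entries are the continuants $K(a_1,\dots,a_k)$. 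Using the relation $E(x)E(0)E(y) = -E(x+y)$ to clear interior zeros and the gap to keep the surviving interior partial quotients of absolute value $\geq 2$, the recurrence $K_k = a_k K_{k-1} - K_{k-2}$ and the triangle inequality yield, inductively, the strict growth
\begin{equation*}
|K(a_1,\dots,a_k)| > |K(a_1,\dots,a_{k-1})| > \cdots > 1 .
\end{equation*}
This quantitative growth forbids long reductions from collapsing and will let me certify irreducibility of specific pairs.

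For infinite index I would exhibit cusps $\xi_n = a_n/c_n$ whose expansions are forced by the gap lemma to have all partial quotients of absolute value $\geq 2$, and attach to each a growth-controlled invariant read off from the normal form (for instance the minimal denominator $|c|$ attained along the $E_2(A)$-orbit, together with the reduced expansion); the growth lemma separates these invariants for a suitable infinite family, placing the $\xi_n$ in infinitely many orbits. \textbf{The main obstacle}, and the delicate point, is that irreducibility and inequivalence are assertions about \emph{all} division chains, not merely the greedy nearest-lattice-point one. Precisely because the covering radius of the lattice $A$ exceeds $1$ exactly when $D \geq 4$, the greedy algorithm can stall, yet a non-greedy sequence of moves might still reduce a pair; ruling this out is what the continuant growth lemma must be marshalled to do, and it is here that the original argument needs correction.

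Finally, for non-normality I would conjugate $e = \left(\begin{smallmatrix}1 & 1 \\ 0 & 1\end{smallmatrix}\right) \in E_2(A)$ by a matrix $g \in SL_2(A)$ with first column $(a,c)$, obtaining the parabolic
\begin{equation*}
g e g^{-1} = \begin{pmatrix} 1 - ac & a^2 \\ -c^2 & 1 + ac \end{pmatrix},
\end{equation*}
which fixes $a/c$ and sends $\infty$ to $(ac-1)/c^2$. By the dictionary, $geg^{-1} \in E_2(A)$ iff this image is $E_2(A)$-equivalent to $\infty$, so it suffices to find a coprime $(a,c)$ for which $(1-ac,\,-c^2)$ is irreducible. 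Producing one such pair is a finite construction made rigorous by the gap and growth lemmas — the same tools that reprove the inequality $SL_2(A) \neq E_2(A)$ — and it exhibits a conjugate of an elementary matrix lying outside $E_2(A)$, so $E_2(A)$ is not normal.
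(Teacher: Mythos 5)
Your route is genuinely different from the paper's --- it is Cohn's continuant/normal-form method rather than Nica's special-pairs machinery --- and your starting points are sound: the gap ``every nonzero $a \in A$ has $|a| = 1$ or $|a| \geq 2$ once $D \geq 4$'' holds for both families of rings, the dictionary between membership in $E_2(A)$ and reducibility of the first column is correct (since $A^\times = \{\pm 1\}$ and $-I \in E_2(A)$), and the conjugation formula $geg^{-1} = \left(\begin{smallmatrix} 1-ac & a^2 \\ -c^2 & 1+ac \end{smallmatrix}\right)$ is right. But there are real gaps. The first is in the normal form itself: clearing interior zeros via $E(x)E(0)E(y) = -E(x+y)$ does \emph{not} leave all surviving interior partial quotients with $|a_i| \geq 2$, because the gap lemma still permits interior units $a_i = \pm 1$, and the zero relation does not touch those. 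You need the additional unit relation $E(x)E(\varepsilon)E(y) = \varepsilon E(x-\varepsilon)E(y-\varepsilon)$ for $\varepsilon = \pm 1$ together with an induction on word length (these reductions can reintroduce zeros and units), before the recurrence $|K_k| \geq 2|K_{k-1}| - |K_{k-2}| > |K_{k-1}|$ is available. Without this the growth lemma, which powers everything downstream, is not established.

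The more serious gap is that neither conclusion of the theorem is actually derived. For infinite index you never write down an infinite family $\xi_n$, never define the separating invariant precisely (it must be shown well defined on $E_2(A)$-orbits, which is exactly the assertion about \emph{all} division chains that you yourself call ``the main obstacle''), and never separate two orbits. This rigidity statement is the whole difficulty; in the paper it is supplied by Nica's special-pairs lemma --- a pair with $|\alpha| = |\beta| < |\alpha \pm \beta|$ is a local minimum of the reduction process, and two such minima in one orbit differ only by one of four symmetries --- after which one merely exhibits infinitely many special pairs (Pell solutions for non-square $D$, and the pairs $(1+n+ni, 1+n-ni)$ with $d \mid n$ for $A = \ZZ[di]$). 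Likewise for non-normality: reducing to the irreducibility of $(1-ac, -c^2)$ is a fine plan, but no pair $(a,c)$ is produced and no irreducibility is verified, whereas the paper derives a concrete contradiction from the group structure that $\mathcal{U}_2(A)/E_2(A)$ would inherit if $E_2(A)$ were normal. (A minor further point: your claimed equality $[SL_2(A):E_2(A)] = \#\left(E_2(A)\backslash (SL_2(A)\cdot\infty)\right)$ is in general only the inequality $\geq$, since distinct right cosets can merge in the double-coset space; fortunately that is the direction you need.) As it stands the proposal is a strategy with the hard steps deferred, not a proof.
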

	
Nica's proof hinges on the existence of non-trivial solutions to the Pell equation $X^2 - D Y^2 = 1$. Since non-trivial solutions exist if and only if $D$ is not a perfect square, the proof is incomplete in the case of rings $\ZZ[di]$ where $d \in \ZZ$. The basic method is still sound, however, and we will provide a simple correction to cover this case as well.

\subsection*{Acknowledgements} The author is much indebted to Alex Kontorovich, both for pointing out the flaw in Nica's paper, and for his helpful suggestions along the way.

\section{Preliminaries:}
Let $\mathcal{U}_2(A)$ denote the set of unimodular pairs; that is, pairs $(\alpha, \beta) \in A^2$ that are the top row of some matrix $\left(\begin{smallmatrix} \alpha & \beta \\ \ast & \ast \end{smallmatrix}\right) \in SL_2(A)$. The usefulness of unimodular pairs is captured in the following lemma:

	\begin{lemma}\label{The Duh Lemma}
	Let $A$ be a commutative ring. Let $L_2(A)$ be the subgroup of $SL_2(A)$ of elements of the form $\left(\begin{smallmatrix} 1 & \\ \ast & 1 \end{smallmatrix}\right)$. Then there exists a bijective map:
	
		\begin{align*}
		L_2(A) \backslash SL_2(A) / E_2(A) & \rightarrow \mathcal{U}_2(A) / E_2(A) \\
		\left[\begin{pmatrix} \alpha & \beta \\ \gamma & \delta \end{pmatrix}\right] & \mapsto \left[(\alpha, \beta)\right].
		\end{align*}
	\end{lemma}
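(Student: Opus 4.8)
The plan is to verify in turn that the proposed map is well-defined, surjective, and injective, with injectivity being the only step requiring real work. Throughout I regard $E_2(A)$ as acting on $\mathcal{U}_2(A)$ on the right: if $(\alpha,\beta)$ is the top row of some $M \in SL_2(A)$ and $h \in E_2(A)$, then $(\alpha,\beta)h$ is the top row of $Mh \in SL_2(A)$, hence again unimodular, so the action is well-defined on $\mathcal{U}_2(A)$.

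For well-definedness of the map, I would observe that left multiplication by an element $g = \left(\begin{smallmatrix} 1 & 0 \\ c & 1 \end{smallmatrix}\right) \in L_2(A)$ leaves the top row of a matrix unchanged, since the first row of $gM$ is $(1,0)M$, the first row of $M$; meanwhile right multiplication by $h \in E_2(A)$ replaces the top row $(\alpha,\beta)$ by $(\alpha,\beta)h$, which lies in the same $E_2(A)$-orbit. Thus the top row of $gMh$ determines the same class in $\mathcal{U}_2(A)/E_2(A)$ as that of $M$, so the map descends to the double-coset space. Surjectivity is then immediate: every unimodular pair is by definition the top row of some $M \in SL_2(A)$, and that $M$ maps to its class.

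The heart of the argument is injectivity. Suppose $M = \left(\begin{smallmatrix} \alpha & \beta \\ \gamma & \delta \end{smallmatrix}\right)$ and $N$ have top rows lying in the same $E_2(A)$-orbit, say the top row of $N$ equals $(\alpha,\beta)h$ for some $h \in E_2(A)$. Replacing $M$ by $Mh$ — which lies in the same double coset — I may assume $M$ and $N$ share the same top row $(\alpha,\beta)$. I would then compute $NM^{-1}$, using $M^{-1} = \left(\begin{smallmatrix} \delta & -\beta \\ -\gamma & \alpha \end{smallmatrix}\right)$ since $\det M = 1$; the top row of $NM^{-1}$ comes out to $(\alpha\delta - \beta\gamma,\, -\alpha\beta + \beta\alpha) = (1,0)$. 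Because $\det(NM^{-1}) = 1$, the bottom-right entry is forced to equal $1$, so $NM^{-1} = \left(\begin{smallmatrix} 1 & 0 \\ \ast & 1 \end{smallmatrix}\right) \in L_2(A)$. Hence $N \in L_2(A)\,M$, so $M$ and $N$ define the same double coset, which is precisely injectivity.

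The one point deserving care — the ``obstacle'', such as it is — is precisely this last computation: the fact that two matrices of $SL_2(A)$ with the same top row differ on the left by a lower unitriangular matrix. Everything depends on $\det M = 1$ making $M^{-1}$ explicit over the commutative ring $A$, with no zero-divisor or invertibility subtleties intervening; no further ring-theoretic input is needed, which is what makes the lemma as elementary as its name suggests.
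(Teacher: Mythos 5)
Your proof is correct and follows essentially the same route as the paper's: check well-definedness under the two actions, note surjectivity from the definition of a unimodular pair, and prove injectivity by showing that two matrices of $SL_2(A)$ with the same top row differ on the left by an element of $L_2(A)$ via the explicit formula for $M^{-1}$. If anything, you are slightly more careful than the paper in making explicit the reduction (replacing $M$ by $Mh$) to the case where the two representatives share the identical top row, a step the paper leaves implicit.
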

	
	\begin{proof}
	We must first check that this map is well-defined. In particular, we should check that multiplication on the left by an element of $L_2(A)$ does not affect which orbit $\left(\begin{smallmatrix} \alpha & \beta \\ \gamma & \delta \end{smallmatrix}\right)$ maps to. Indeed:
	
		\begin{align*}
		\begin{pmatrix} 1 & \\ \ast & 1 \end{pmatrix} \begin{pmatrix} \alpha & \beta \\ \gamma & \delta \end{pmatrix} = \begin{pmatrix} \alpha & \beta \\ \ast \alpha + \gamma & \ast \beta + \delta \end{pmatrix} \mapsto (\alpha, \beta).
		\end{align*}
		
	That the map is surjective is clear from the definition of a unimodular pair. It remains to prove injectivity. Fix an element $[(\alpha, \beta)] \in \mathcal{U}_2(A) / E_2(A)$, and suppose that it has two pre-images, for which we choose explicit coset representatives $\left(\begin{smallmatrix} \alpha & \beta \\ \gamma & \delta \end{smallmatrix}\right)$ and $\left(\begin{smallmatrix} \alpha & \beta \\ \gamma' & \delta' \end{smallmatrix}\right)$. But then:
	
		\begin{align*}
		\begin{pmatrix} \alpha & \beta \\ \gamma & \delta \end{pmatrix} \begin{pmatrix} \alpha & \beta \\ \gamma' & \delta' \end{pmatrix}^{-1} &=\begin{pmatrix} \alpha & \beta \\ \gamma & \delta \end{pmatrix} \begin{pmatrix} \delta' & -\beta \\ -\gamma' & \alpha \end{pmatrix} = \begin{pmatrix} 1 & \\ \ast & 1 \end{pmatrix} \in L_2(A).
		\end{align*}
		
		Therefore:
		
		\begin{align*}
		\left[\begin{pmatrix} \alpha & \beta \\ \gamma & \delta \end{pmatrix}\right] &= \left[\begin{pmatrix} \alpha & \beta \\ \gamma' & \delta' \end{pmatrix}\right].
		\end{align*}
	\end{proof}
	
Notice that $L_2(A) \subset E_2(A)$. Lemma ~\ref{The Duh Lemma} tells us two important facts. Firstly, it tells us that if we want to know the size of $SL_2(A) / E_2(A)$, we can get at this information by just considering the orbits of unimodular pairs---in particular, $E_2(A)$ is an infinite index subgroup if $U_2(A)/E_2(A)$ is an infinite set. Secondly, it gives us a convenient way to examine the normality of $E_2(A)$---if $E_2(A)$ is a normal subgroup, then it is an immediate consequence that the obvious map $SL_2(A)/E_2(A) \rightarrow \mathcal{U}_2(A)/E_2(A)$ is a bijective map, and then we can transfer the group structure on $SL_2(A)/E_2(A)$ to $\mathcal{U}_2(A)/E_2(A)$. As we shall see, this will significantly simplify computations, and give an easy way to prove that $E_2(A)$ is non-normal.

In order to prove the infinitude of orbits of $\mathcal{U}_2(A)$, Nica introduced the concept of special pairs, defined as all unimodular pairs $(\alpha, \beta)$ such that $|\alpha| = |\beta| < |\alpha \pm \beta|$. He then proved the key result:

	\begin{lemma}\label{Nica's lemma}
	Let $A = \ZZ[\sqrt{-D}]$ or $A = \ZZ[\frac{1}{2}(1 + \sqrt{1 - 4D})]$ with $D \geq 4$. Let $(\alpha, \beta)$, $(\alpha', \beta')$ be special pairs. Then $(\alpha', \beta')$ is $E_2(A)$-equivalent to $(\alpha, \beta)$ iff $(\alpha',\beta') = (\alpha, \beta)$, $(\beta, -\alpha)$, $(-\alpha, -\beta)$, or $(-\beta, \alpha)$.
	\end{lemma}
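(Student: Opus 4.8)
The plan is to treat the two implications separately, the ``if'' direction being immediate and the converse carrying all the weight. Throughout I regard $E_2(A)$ as acting on the right on unimodular pairs, so that the generators act by the column operations $U(t)\colon(\alpha,\beta)\mapsto(\alpha,\beta+t\alpha)$ and $L(t)\colon(\alpha,\beta)\mapsto(\alpha+t\beta,\beta)$ for $t\in A$. For the easy direction, note that the rotation $w=\left(\begin{smallmatrix}0&-1\\1&0\end{smallmatrix}\right)=L(1)U(-1)L(1)$ lies in $E_2(A)$ and sends $(\alpha,\beta)\mapsto(\beta,-\alpha)$; its powers $I,w,w^2=-I,w^3$ cyclically permute the four listed pairs, and each of the conditions $|\alpha|=|\beta|$ and $|\alpha\pm\beta|>|\alpha|$ is visibly invariant under swapping and negating coordinates. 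Hence the four pairs are special and mutually $E_2(A)$-equivalent.

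For the converse I would use the height $M(\alpha,\beta)=\max(|\alpha|,|\beta|)$ together with the following local lemma: if $(\alpha,\beta)$ is special with $r=|\alpha|=|\beta|$, then $|\beta+t\alpha|>r$ and $|\alpha+t\beta|>r$ for every nonzero $t\in A$. Writing $u=\alpha\bar\beta/r^2$, so that $|u|=1$ and $|\mathrm{Re}\,u|<\tfrac12$ (this last inequality being exactly the special condition $|\alpha\pm\beta|>r$), a direct expansion gives $|\beta+t\alpha|^2-r^2=r^2(|t+\bar u|^2-1)$, so the claim becomes the purely geometric statement that no nonzero lattice point lies in the closed unit disc about $-\bar u$. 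The case $|t|=1$ (only $t=\pm1$, since $D\ge4$ leaves no element of norm $2$ or $3$) is the special hypothesis itself; the case $|t|>2$ follows from $|t+\bar u|\ge|t|-1>1$; and the only borderline is $|t|=2$, where equality $|t+\bar u|=1$ forces $\bar u=-t/2$. Ruling this out (below) completes the lemma; granting it, every nonzero elementary move strictly raises the modulus of the coordinate it changes, so a special pair is a strict local minimum of $M$ in its orbit.

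It then remains to promote this local statement to a global one and to classify the minimisers. I would show that $M$ attains its orbit-minimum exactly at the special pairs by a descent argument: given any unimodular pair, repeatedly reducing the coordinate of larger modulus by the other either strictly decreases $M$ or exposes a special pair, at which point the local lemma blocks all further descent. Granting that the special pairs are the $M$-minimisers, two equivalent special pairs share the same $r$ and are both reduced; a confluence argument on reduced words—using that the first move off any special pair is forced to land at modulus $>r$, and that the only return to level $r$ retraces the excursion $r\to|\alpha+\beta|\to|\alpha+\beta|\to r$ built into $w$—should identify the reduced pairs in a single orbit with one $\langle w\rangle$-orbit, yielding the four listed pairs and nothing more.

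The main obstacle is the borderline $|t|=2$ of the local lemma, which is precisely where the perfect-square phenomenon enters. The equality $\bar u=-t/2$ reads $\beta/\alpha=-t/2$, a non-unit ratio, and I expect to exclude it by showing it forces a common factor of $\alpha$ and $\beta$, contradicting unimodularity—for instance in $\ZZ[2i]$ it forces $\alpha,\beta\in2\ZZ[i]$. Verifying that this arithmetic obstruction persists for every ring with $D\ge4$, in particular for the rings $\ZZ[di]$ omitted by the Pell-based argument, is the delicate point and the crux of the correction. The companion difficulty is making the global descent and confluence step fully rigorous: as the excursion realised by $w$ shows, $M$ is not monotone along words, so one must argue carefully that no non-symmetric word can return a special pair to the minimal level.
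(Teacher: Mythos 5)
A preliminary remark: the paper does not prove this lemma at all --- it is quoted as a black box from Nica's article --- so there is no in-paper proof to compare against, and I am assessing your argument on its own terms. Your ``if'' direction is correct, and your one-step local lemma is true and is genuinely the right first ingredient: the reduction to $|t+\bar u|>1$ is exactly how the hypothesis $|\mathrm{Re}(u)|<\tfrac12$ interacts with the fact that nonzero elements of $A$ have modulus $1$ or at least $2$ once $D\geq 4$. The borderline case $|t|=2$, $\bar u=-t/2$ can always be excluded, but not always by your ``common factor'' mechanism: from $2\beta=-t\alpha$ and $\alpha\delta-\beta\gamma=1$ one gets $\alpha(2\delta+t\gamma)=2$, hence $N(\alpha)\in\{1,2,4\}$; the case $N(\alpha)=1$ forces $t=\pm2$ and $u=\mp1$, contradicting specialness, the case $N(\alpha)=2$ is impossible in these rings, and the case $N(\alpha)=4$ forces $\alpha=\pm2$, $\beta=\mp t$, which is never unimodular. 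You should write this out, because for $A=\ZZ[\tfrac12(1+\sqrt{-15})]$ the obstruction in the last case is the non-principality of the prime above $2$ rather than a visible common divisor, so the heuristic you state for $\ZZ[2i]$ does not transfer verbatim.

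The genuine gap is the global step, which you correctly flag but do not supply, and it is where essentially all the content of the lemma lives. The local lemma controls only the \emph{first} elementary move off a special pair; the very next move can lower the modulus again (your own $w=L(1)U(-1)L(1)$ realizes such an excursion), so ``descent on $M$'' and ``confluence of reduced words'' are at this stage names for the conclusion rather than arguments for it. The standard way to close this (Cohn's, which Nica follows) is a normal form for elements of $E_2(A)$: every element can be written as a signed product of an upper-triangular unipotent with $E(t_1)\cdots E(t_k)$, where $E(t)=\left(\begin{smallmatrix} t & -1 \\ 1 & 0\end{smallmatrix}\right)$ and Cohn's relations (collapsing any interior letter equal to $0$ or $\pm1$) allow one to assume $t_2,\dots,t_{k-1}\notin\{0,\pm1\}$, hence $|t_i|\geq2$ because $D\geq4$. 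Feeding a special pair into such a word produces a three-term recursion $\beta_{j+1}=t_{j+1}\beta_j-\beta_{j-1}$, and the estimate $|\beta_{j+1}|\geq 2|\beta_j|-|\beta_{j-1}|>|\beta_j|$ shows the moduli increase strictly once they have increased once; the first increase is supplied by your local lemma (with $t_1=0$ handled separately, which is precisely where the four listed pairs come from). Only with this normal form and monotone induction does the orbit provably never return to the level $r=|\alpha|=|\beta|$, so without it your proposal is not yet a proof; with it, your local lemma becomes the base case and the rest is bookkeeping.
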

	
With this lemma, proving $E_2(A)$ is infinite index reduces to constructing an infinite family of special pairs---since any special pair can only be $E_2(A)$-equivalent to finitely many other special pairs, we know immediately that $\mathcal{U}_2(A)/E_2(A)$ is infinite, which gives the desired result. For non-normality, one uses the arithmetic of the matrix group $SL_2(A)$ to derive a contradiction assuming $E_2(A)$ is normal (although Nica does not make use of Lemma ~\ref{The Duh Lemma}, his proof is easily seen to be equivalent to this strategy). Better yet, it is really enough to do the construction for rings $A = \ZZ[\sqrt{-D}]$, as taking $D' = 4D - 1$, it is immediate that this construction works just as well for rings $A = \ZZ[\frac{1}{2}(1 + \sqrt{1 - 4D})]$.

Nica's construction of such an infinite family hinges crucially on the existence of solutions to the Pell equation $X^2 - DY^2 = 1$. As mentioned previously, this makes it unsuitable for the rings $\ZZ[di]$.

%colon added
\section{The Case $A = \ZZ[\MakeLowercase{di}]$:}

Thankfully, this is easily remedied by the construction of a new infinite family of special unimodular pairs.

%proof environment added
\begin{proof}
Specifically, notice that if $d | n$, then:

	\begin{align*}
	\begin{pmatrix} 1 + n + ni & 1 + n - ni \\ n & 1 - n i \end{pmatrix} \in SL_2\left(\ZZ[di]\right).
	\end{align*}
	
Thus the pair $(1 + n + ni, 1 + n - ni)$ is unimodular. That it is special is straightforward, since if $n > 1$:

	\begin{align*}
	\|1 + n + ni\|^2 &= (1 + n)^2 + n^2 = \|1 + n - ni\|^2 \\
	\|1 + n + ni + (1 + n - ni)\|^2 &= (2 + 2n)^2 > (1 + n)^2 + n^2 \\
	\|1 + n + ni - (1 + n - ni)\|^2 &= (2n)^2 > (1 + n)^2 + n^2.
	\end{align*}
	
Therefore, for any $d \geq 2$, we have an infinite family of special unimodular pairs. Applying Lemma ~\ref{Nica's lemma}, we see immediately that $E_2(A)$ is an infinite index subgroup of $SL_2(A)$ by the above discussion.

Proving that $E_2(A)$ is a non-normal subgroup is similarly easy. Indeed, suppose that $E_2(A)$ is normal, so that $\mathcal{U}_2(A)/E_2(A)$ inherits a group structure. We have matrix relations:

	\begin{align*}
	\begin{pmatrix} 0 & 1 \\ -1 & 0 \end{pmatrix} \begin{pmatrix} 1 - ni & -n \\ -n & 1 + ni \end{pmatrix} \begin{pmatrix} 0 & -1 \\ 1 & 0 \end{pmatrix} &= \begin{pmatrix} 1 + ni & n \\ n & 1 - ni \end{pmatrix} \\	
	\begin{pmatrix} 1 - ni & -n \\ -n & 1 + ni \end{pmatrix} &= \begin{pmatrix} 1 + ni & n \\ n & 1 - ni \end{pmatrix}^{-1}.
	\end{align*}
	
Projecting into $\mathcal{U}_2(A)/E_2(A)$, this simplifies greatly (especially since it is readily checked that $\left(\begin{smallmatrix} 0 & 1 \\ -1 & 0 \end{smallmatrix}\right) \in E_2(A)$):

	\begin{align*}
	[(1 - ni, -n)] &= [(1 + ni, n)] \\
	[(1 - ni, -n)] &= [(1 + ni, n)]^{-1}.
	\end{align*}
	
In particular, this implies that $[(1 - ni, -n)]^2 = 1$. However, it is also true that:
	
\begin{align*}
\begin{array}{ll}
%Note the change in this line
	\begin{pmatrix} 1 - ni & -n \\ -n & 1 + ni \end{pmatrix}^2 &= \begin{pmatrix} 1 - 2ni	& -2n \\ -2n & 1 + 2ni \end{pmatrix} \vspace{5mm}\\
  \multicolumn{2}{l}{\begin{pmatrix} 0 & -1 \\ 1 & 0 \end{pmatrix} \begin{pmatrix} 1 - 2ni & -2n \\ -1 - 2n + 2ni & 1 + 2n + 2ni \end{pmatrix} \begin{pmatrix} 0 & 1 \\ -1 & 0 \end{pmatrix}} \\
	&=\begin{pmatrix} 1 + 2n + 2ni & 1 + 2n - 2ni \\ 2n & 1 - 2ni \end{pmatrix},
\end{array}
\end{align*}
	
\noindent which projects to:

	\begin{align*}
	[(1 - ni, -n)]^2 &= [(1 - 2ni, -2n)] \\
	[(1 - 2ni, -2n)] &= [(1 + 2n + 2ni, 1 + 2n - 2ni)].
	\end{align*}
	
By Lemma ~\ref{Nica's lemma},
	
	\begin{align*}
	[(1 + 2n + 2ni, 1 + 2n - 2ni)] \neq [(1 + 2n' + 2n'i, 1 + 2n' - 2n'i)],
	\end{align*}
	
\noindent for distinct values of $n, n'$, so we can always choose $n$ such that $[(1 + 2n + 2ni, 1 + 2n - 2ni)] \neq 1$. But by the above, this implies that $[(1 - ni, -n)]^2 \neq 1$, which is a contradiction. Therefore, $E_2(A)$ is a non-normal subgroup of $SL_2(A)$.
\end{proof}

%Blurb added
Equivalently, one can show directly that the matrix:

\begin{align*}
\begin{pmatrix} 1 - n i & -n \\ -n & 1 + ni \end{pmatrix}\begin{pmatrix} 0 & 1 \\ -1 & 0 \end{pmatrix}\begin{pmatrix} 1 - n i & -n \\ -n & 1 + ni \end{pmatrix}^{-1}
\end{align*}

\noindent does not belong to $E_2(A)$, proving that $E_2(A)$ is non-normal. We leave this as an exercise for the reader.

\hskip0.1in

\address{Department of Mathematics, Yale University, 10 Hillhouse Avenue, New Haven, CT 06511}

\email{arseniy.sheydvasser@yale.edu}

\begin{thebibliography}{1}
\bibitem{BMS} H.Bass, J.Milnor, and J.-P. Serre, Solution of the congruence subgroup problem for $SL_n$ $(n \geq 3)$ and $Sp_{2 n}$ $(n \geq 2)$.
\textit{Publ. Math. Inst. Hautes \'Etudes Sci} \textbf{33} (1967) 59-137.

\bibitem{cohn} P. M. Cohn, On the structure of the $GL_2$ of a ring.
\textit{Publ. Math. Inst. Hautes \'Etudes Sci} \textbf{30} (1966) 5-53.

\bibitem{nica11} B. Nica, The unreasonable slightness of $E_2$ over imaginary quadratic rings.
\textit{Amer. Math. Monthly} \textbf{118(5)} (2011) 455-462.

\bibitem{vaserstein} L.N. Vaserstein, On the group $SL_2$ over Dedekind rings of arithmetic type.
\textit{Math. USSR Sb} \textbf{18} (1972) 321-332.
\end{thebibliography}
\end{document}